\DeclareMathOperator{\Cay}{Cay}
\theoremstyle{plain}
\newtheorem{thm}[subsection]{Theorem}
\theoremstyle{definition} 
\newtheorem{lem}[subsection]{Lemma}
\newtheorem{cor}[subsection]{Corollary}
\newtheorem{exa}[subsection]{Example}
\newtheorem{rem}[subsection]{Remark}
\newtheorem{theorem*}{Main Result}
\newtheorem{cor*}[theorem*]{Corollary}
\numberwithin{equation}{section}
\begin{document}
\title{On the Isomorphism Problem of Cayley Graphs of Graph Products}
\author{Marjory Mwanza}
\address{Marjory Mwanza\\ Department of Mathematics, M\"unster University, Germany}
\email{mmwanza@uni-muenster.de}
\thanks{The author is funded by the Deutsche Forschungsgemeinschaft (DFG, German Research Foundation) under Germany's Excellence Strategy EXC 2044 --390685587, Mathematics Münster: Dynamics--Geometry--Structure.}

\maketitle

\begin{abstract}
We investigate Cayley graphs of graph products by showing that graph products with vertex groups that have isomorphic Cayley graphs yield isomorphic Cayley graphs. 
\end{abstract}

\section{Introduction}
Graph products of groups are generalizations of both free and direct products. They are used to define a product of groups in a way that some factors may commute, but others not. The name graph product stems from the fact that the data about these products is encoded in a graph. The groups over which we are taking the product are called the vertex groups. In the graph, each vertex will represent a vertex group, and every edge will represent a relation. If the graph has no edges, the graph product coincides with the free product, and if the graph is complete, the graph product coincides with the direct product. More precisely, let $\Gamma = (V, E)$ be a graph. The graph product of groups $G_{\Gamma}$ is the group obtained from the free product of the vertex groups $G_v$ for $v \in V$, by adding the commutator relations $[g, h] = 1$ for all $g \in G_v, h \in G_u$ such that $\{v, u\} \in E$. Graph products were first introduced by E. Green in her PhD thesis for arbitrary vertex groups \cite{green1990graph}. Further properties of graph products have been investigated by S. Hermiller and J. Meier  \cite{hermiller1995algorithms} and also by  D. Wise  and T. Hsu~\cite{hsu1999linear}. Recently they have become a topic of interest for their close connection to right angled buildings, CAT($0$) cube complexes and groups acting on trees (see for example \cite{Gibbins} and \cite{ruane2013cat}).
 
On the other hand, much progress has been made in analyzing finitely generated groups, particularly abelian groups, through their Cayley graphs, but less is known about the interplay between graph products of groups and the resulting Cayley graph structures. In \cite{loh2013finitely}, Clara L\"oh showed that two finitely generated abelian groups admit isomorphic Cayley graphs if and only if they have the same rank and their torsion parts have the same cardinality. This indicates that Cayley graphs of finitely generated abelian groups are rather rigid.
 
In this article, we focus on conditions under which graph products of  groups yield isomorphic Cayley graphs. To do this, we make sure that the generating sets are uniformly defined by their vertex groups. Firstly, we show that a bijection map between two sets of vertex groups implies a bijection between their graph products, then we explore Cayley graph isomorphisms. With this, we have the following main result: 

\begin{theorem*} \label{main} 
 {\it Let $\Gamma=(V,E)$ be a graph and for each vertex $v \in V$, let $G_v$ and $H_v$ be vertex groups, with symmetric generating sets 
$S_v\subset G_v-\{e_{G_{m}}\}$ and 
$T_v\subset H_v-\{e_{H_{m}}\}$. Suppose that $f_v: \Cay(G_v,S_v) \to \Cay(H_v,T_v)$ is a graph isomorphism for each $v\in V$.
Denote the graph products of $G_v$ and $H_v$ by $G_\Gamma$ and $H_\Gamma$ respectively, and put $S:=\bigcup_{v\in V}S_v$ and $T:=\bigcup_{v\in V}T_v $. Then there is a graph isomorphism
$$\Cay(G_\Gamma, S) \cong \Cay(H_\Gamma, T).$$ }
\end{theorem*}

This result addresses the isomorphism problem for Cayley graphs of graph products. Furthermore, we present a result about when the Cayley graphs of graph products are  isomorphic, but the groups are not:

\begin{cor}
{\em Let $\Gamma$ be a finite graph and let $G_\Gamma$ and $H_\Gamma$ be graph products of finite groups $G_v$ and $H_v$, with $|G_v|=|H_v|$. Then for suitable choices of vertex groups, $G_{\Gamma}\not\cong H_{\Gamma}$, while $\Cay(G_\Gamma,S_1)\cong \Cay(H_\Gamma,S_2)$.}
\end{cor}

With that, we obtain many examples for non-isomorphic finitely generated groups with isomorphic Cayley graphs.

\vspace{12pt}

\noindent\textbf{Acknowledgements:} The author sincerely thanks Prof. Linus Kramer for his invaluable guidance, advice, and constant support throughout the development of this article. His expertise and thoughtful feedback were crucial in shaping the direction of this work, and his encouragement greatly contributed to its completion.

\section{Graph products}

By a \emph{graph}, we just mean a pair $\Gamma =(V,E)$, where $V$ is a set, and we will call it the \emph{vertex set of $\Gamma$}, and $E$ is a set of $2$-element subsets of $V$, called the \emph{edge set of $\Gamma$}. Such graphs are also called simplicial graphs.
 Let $\Gamma = (V,E)$ be a simplicial graph. For each vertex $v$, let $G_v$ be a group, which we call the vertex group of $v$. We assume that vertex groups for different vertices have no elements in common. The graph product $G_{\Gamma}$ of a collection of vertex groups $G_{v}$, with respect to $\Gamma$, is the group obtained from the free product of $G_v$ by adding the relation $[g_v,g_u]=1$ for all $g_v \in G_v, g_u \in G_u$ such that $\{v,u\}$ is an edge of $\Gamma$. The group $G_\Gamma$ can be presented as follows:
$$G_{\Gamma} ={* G_v}/ \left< \left< [g_v,g_u]|g_v \in G_v, g_u \in G_u ~~ \text{and}~~\{u,v\} \in E \right> \right>$$

An important particular case of a graph product of groups is when all the vertex groups are cyclic. In the
case when they are infinite cyclic, one obtains the class of right-angled Artin groups, and when they have order 2,
one obtains the class of right-angled Coxeter groups.

 In order to develop a normal form for elements in a graph product as an analogue of a reduced word in a free product, we introduce a reduced word in a graph product. A word in a graph product is a sequence
$(g_1,\ldots,g_k)$, representing the element $g_1g_2\cdots g_k$ in the graph product $G_\Gamma$. 
The $g_k$ are elements of the vertex groups, which are called \emph{syllables}.
The following are transformations for a word $w=(g_1,\ldots,g_k)$ that do not change the value of the element represented by the word.
A word that cannot be shortened in this way is called a \emph{reduced word}. 
\begin{itemize}
	\item [(i)] Remove a syllable $g_i$ if $g_i =1$.
	\item [(ii)] Replace two consecutive syllables $g_i$ and $g_{i+1}$ 
    if they belong to the same vertex group $G_v$ with a single syllable $g_ig_{i+1}$ (i.e, the two syllables combine to become one syllable).
	\item[(iii)] (\emph{Syllable shuffling}) For consecutive syllables $g_i \in G_u, g_{i+1} \in G_v$ with $\{u,v\}$ an edge of $\Gamma$, interchange $g_i$ and $g_{i+1}$.
\end{itemize}

Note that $(iii)$ preserves the length of the word whereas $(i),(ii)$ decrease it by $1$. When we start with some word $w$ and apply finitely many transformations $(i)-(iii)$ we can obtain a \emph{reduced word} $w'$ representing the same element of the group $G_{\Gamma}$.

The following theorem about the normal form of graph products was first proved by E. Green  \cite[Theorem 3.9]{green1990graph} in her thesis.

\begin{thm}[The Normal Form Theorem for Graph Products] \label{normal}
Let $G_{\Gamma}$ be a graph product of the groups $G_v$ for $v \in V$. Each element $g \neq 1$ of $G_{\Gamma}$ can be expressed as a 
product $g=g_1g_2 \dots g_k$ where $w=(g_1,g_2, \dots, g_k)$
is a reduced word. This reduced word is unique up to syllable shuffling.
\end{thm}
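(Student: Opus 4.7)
The plan is to treat existence and uniqueness separately. Existence is the easier half: transformations (i) and (ii) strictly decrease the length of a word, while (iii) preserves it, so starting from any word representing a nontrivial $g$ and applying the three moves greedily must terminate at a reduced word in finitely many steps.

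For uniqueness I would use a van der Waerden style action argument. Let $\mathcal{W}$ denote the set of reduced words in the $G_v$, modulo the equivalence generated by syllable shuffling (i.e.\ by transformation (iii)). The goal is to construct an action of $G_\Gamma$ on $\mathcal{W}$ such that, for every reduced word $w = (g_1, \ldots, g_k)$ representing $g \in G_\Gamma$, one has $g \cdot [\varnothing] = [w]$, where $[\varnothing]$ denotes the class of the empty word. Once this is in place, any two reduced expressions for the same $g$ lie in the same shuffling class, which is the uniqueness statement.

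To build the action, for each $v \in V$ and each $h \in G_v$ I define a self-map $\phi_h$ of $\mathcal{W}$ as follows. Given a reduced word $w = (g_1, \ldots, g_k)$, call an index $i$ \emph{$v$-accessible} if $g_i \in G_v$ and every letter $g_j$ with $j < i$ lies in some $G_{u_j}$ with $\{u_j, v\} \in E$, so that $g_i$ can be shuffled to the front of $w$. Since $\Gamma$ has no loops, at most one $v$-accessible index exists: if $i < j$ were both $v$-accessible, the letter $g_i$ would lie in $G_v$ at a position preceding $j$, forcing $v$ to be its own neighbor in $\Gamma$. If there is no $v$-accessible index and $h \neq 1$, set $\phi_h([w]) = [(h, g_1, \ldots, g_k)]$. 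If $i$ is the unique $v$-accessible index, shuffle $g_i$ to the front and let $\phi_h([w])$ be the class of the resulting word with $g_i$ replaced by $hg_i$, dropping that leading letter if $hg_i = 1$. Finally set $\phi_1 = \mathrm{id}_{\mathcal{W}}$.

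The heart of the proof is then three verifications: (a) each $\phi_h$ is well defined on $\mathcal{W}$ and again yields a reduced word; (b) $\phi_h \circ \phi_{h'} = \phi_{hh'}$ for all $h, h' \in G_v$, so that each $G_v$ acts on $\mathcal{W}$; (c) $\phi_h$ and $\phi_{h'}$ commute whenever $h \in G_u$, $h' \in G_v$, and $\{u,v\} \in E$. Granted (a)--(c), one obtains a homomorphism from the free product $\ast_{v \in V} G_v$ into the symmetric group of $\mathcal{W}$ that kills every defining relation of $G_\Gamma$, hence a genuine $G_\Gamma$-action on $\mathcal{W}$. A short induction on the length of a reduced expression then yields $g \cdot [\varnothing] = [w]$, completing uniqueness. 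The main obstacle will be the case analysis underlying (a) and especially (c), where one must track how the $u$- and $v$-accessible positions of $w$ are created or destroyed when a syllable is prepended at the front or merged into the leading letter, and verify that the two compositions always land in the same shuffling class. Reducedness of $w$ together with the loopless structure of $\Gamma$ keeps the number of cases finite and the arguments elementary, but this bookkeeping is where the real effort goes.
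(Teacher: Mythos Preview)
The paper does not actually prove this theorem: it simply quotes it as Green's result \cite[Theorem~3.9]{green1990graph} and uses it as a black box. So there is no ``paper's own proof'' to compare against.

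That said, your proposal is sound and is essentially the standard argument. The existence half is exactly right. For uniqueness, the van der Waerden style action on shuffling-classes of reduced words is the classical strategy (it is what Green does, and what one finds in most modern treatments). Your definition of ``$v$-accessible index'' is the right invariant to track, and your argument that at most one such index can occur in a reduced word is correct. The three verifications (a)--(c) you list are indeed the substance of the proof; you are also right that (c) is where the case analysis lives, and that reducedness together with the simplicial structure of $\Gamma$ keeps it finite. One small refinement worth noting for (a): when you prepend $h$ (in the no-accessible-index case) or replace the front letter by $hg_i$, you should check that the result is again \emph{reduced}, not merely a word; this follows because any letter that could shuffle past the new front syllable and merge with it would already witness a $v$-accessible index in $w$, contrary to hypothesis. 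With that observation in hand, the rest is routine bookkeeping.
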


\noindent The next Lemma is the first step
towards our main result.

\begin{lem}\label{lemma1}
 {\em Let $\Gamma=(V,E)$ be a graph and for each vertex $v \in V$,  let $G_v$ and $H_v$ be vertex groups.
Assume that for every vertex $v$, there is a
bijection $f_v:G_v\to H_v$, with $f_v(e_{G_v})=e_{H_v}$.
Denote the graph products of $G_v$ and $H_v$ by $G_{\Gamma}$ and $H_{\Gamma}$, respectively. Then we obtain a bijection $f : G_{\Gamma} \to H_{\Gamma}$ such that if $w=(g_1, \dots, g_k$) is a reduced word,  we have $f(w) = (f_{v_ {1}}(g_1),\dots, f_ {v_{m}}(g_k))$. Moreover, $f(w)$ is again a reduced word.}
\end{lem}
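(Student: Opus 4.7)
The plan is to use the Normal Form Theorem (Theorem~\ref{normal}) as the backbone: define $f$ on each non-identity element of $G_\Gamma$ by picking any reduced word representative $(g_1,\ldots,g_k)$ with $g_i \in G_{v_i}$ and declaring $f(g_1\cdots g_k) := f_{v_1}(g_1)\cdots f_{v_k}(g_k) \in H_\Gamma$, and setting $f(e_{G_\Gamma}) = e_{H_\Gamma}$. The core task is then to verify that this recipe (a) is well-defined, (b) produces a reduced word in $H_\Gamma$, and (c) is a bijection.

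For well-definedness, I would invoke the uniqueness clause of Theorem~\ref{normal}: any two reduced words for the same element differ by a finite sequence of syllable shuffles. A single shuffle swaps adjacent syllables $g_i \in G_u$, $g_{i+1} \in G_v$ with $\{u,v\} \in E$. Applying the recipe termwise gives adjacent syllables $f_u(g_i) \in H_u$ and $f_v(g_{i+1}) \in H_v$ with the same edge relation, so these commute in $H_\Gamma$. Hence syllable-shuffle-equivalent reduced words on the $G$-side produce syllable-shuffle-equivalent words on the $H$-side, and in particular they represent the same element of $H_\Gamma$. Thus $f$ depends only on the element, not on the chosen reduced word.

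For reducedness of the image, I would check the two obstructions to being reduced directly. No image syllable $f_{v_i}(g_i)$ equals $e_{H_{v_i}}$, because $f_{v_i}$ is a bijection with $f_{v_i}(e_{G_{v_i}}) = e_{H_{v_i}}$ and $g_i \neq e_{G_{v_i}}$. Moreover, no two syllables $f_{v_i}(g_i)$, $f_{v_j}(g_j)$ lying in a common vertex group $H_v$ can be brought to adjacent positions by shuffling: such a shuffling sequence on the $H$-side would use only edges of $\Gamma$, so the same sequence is admissible on the $G$-side and would bring $g_i,g_j \in G_v$ together, contradicting the reducedness of $(g_1,\ldots,g_k)$.

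Bijectivity then follows by symmetry: the same construction with the bijections $f_v^{-1}: H_v \to G_v$ yields a map $h: H_\Gamma \to G_\Gamma$, and $h \circ f$ (resp.\ $f \circ h$) acts as the identity on each reduced word syllable-by-syllable, hence on each element. The main conceptual obstacle is the well-definedness step, since one must rule out any interference between the algebraic operation of evaluating a word and the combinatorial operation of shuffling; the argument above sidesteps this by exploiting that shuffling is entirely dictated by the common graph $\Gamma$, which is identical on both sides.
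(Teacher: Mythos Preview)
Your proposal is correct and follows essentially the same approach as the paper: define $f$ syllable-wise on reduced words, use the Normal Form Theorem to establish well-definedness via the correspondence of syllable shuffles on both sides, verify that the image word is reduced, and obtain bijectivity by running the same construction with the inverse bijections $f_v^{-1}$. Your treatment is in fact slightly more explicit than the paper's in two places---you separately verify that no image syllable is the identity (using $f_v(e_{G_v})=e_{H_v}$ and injectivity), and you spell out the contrapositive argument that a hypothetical shuffle bringing two same-vertex syllables adjacent on the $H$-side would transfer verbatim to the $G$-side---but these are refinements of the same argument rather than a different route.
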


\begin{proof}
Let $w=(g_1, \dots, g_k$) be a reduced word. Then we
define the map $$f : G_{\Gamma} \to H_{\Gamma}$$  on reduced words by 
$$f(w) = (f_{v_ {1}}(g_1),\dots, f_ {v_{k}}(g_k))$$ 
 where $f_{v_{i}}: G_{v_{i}} \to H_{v_{i}}$ is the given bijection for each vertex $v_i$.
Since $f_v$ maps elements of $G_v$ to $H_v$, the map
$f$ preserves the structure of reduced words as we explain now. 
Consecutive  syllables $g_i$ and $g_{i+1}$ from different vertex groups remain in distinct vertex groups when mapped under $f$. The commutation relations between vertex groups in $G_\Gamma$ and $H_\Gamma$ are preserved, because if $G_{v_{i}}, G_{v_{i+1}}$ belong to adjacent vertices
$v_i,v_{i+1}$, they are mapped to vertex groups $H_{v_{i}}, H_{v_{i+1}}$ respectively. Therefore, shuffling $g_ig_{i+1}=g_{i+1}g_{i}$
in $G_\Gamma$ correspond to shuffling
$f_{v_{i}}(g_i)f_{v_{i+1}}(g_{i+1})=f_{v_{i+1}}(g_{i+1})f_{v_{i}}(g_{i})$ in $H_\Gamma$. Hence $f$ preserves syllable shuffling.
This shows that $f$ maps reduced words to reduced
words.
By Theorem \ref{normal}, the map
$f$ yields a well-defined map from
$G_\Gamma$ to $H_\Gamma$.

To show that $f$ is a bijection, we let $f_v'$
 denote the inverse of $f_v$ and then we define $f':H_\Gamma\to G_\Gamma$
 similarly as above, using the maps $f_v':H_v\to G_v$.
Let $(h_1,\ldots,h_k)$ be a reduced word where $h_i \in H_{v_{i}}$.

We now show that \( f \) and \( f' \) are inverses of each other. 
We have that

\begin{align*}
  f(f'(h_1,\ldots,h_k)) & = f\big(f'_{v_1}(h_1),\cdots, f'_{v_k}(h_k)\big)\\
  & = \big(f_{v_1}(f'_{v_1}(h_1)),\cdots f_{v_k}(f'_{v_k}(h_k))\big)\\
  &= (h_1,\ldots,h_k).  
\end{align*}
Thus $f\circ f'=\text{id}_{H_\Gamma}$.
Similiarly, $f'\circ f=\text{id}_{G_\Gamma}$.
Therefore, $f$ is bijective.
\end{proof}

\vspace{12pt}
\section{Cayley graphs}

Let $G$ be a group and let $S$ be a generating set of $G$. We assume that $S$ is symmetric, that is, $S=S^{-1}$
and it does not contain the identity.
Then the Cayley graph of $G$ with respect to the generating set $S$ is the graph $\Cay(G,S)$ whose set of vertices is $G$ and whose
set of edges is $\{\{g,gs\}|g \in G, s \in S\}$. Two vertices in a Cayley graph are thus adjacent if and only if they differ by right multiplication by an element of the generating set in question. This construction offers a visual way to explore the properties of the group:
the identity element $e_G$ serves as a central vertex, and the elements in $S$ are adjacent to it. 

\noindent We recall some properties of Cayley graphs from \cite[Remark 3.2.3]{loh2017geometric}.

\begin{rem}[Elementary properties of Cayley graphs]\label{elementary}\ 

\begin{itemize}
    \item [(i)] Cayley graphs are connected as every vertex $g$ can be reached from the vertex of the neutral element by walking along the edges corresponding to a presentation of minimal length of $g$ in terms of the given generators.
    \item[(ii)] Cayley graphs are regular in the sense that every vertex has the same number $|S|$ of neighbors.
    \item[(iii)] A Cayley graph is locally finite if and only if the generating set is finite; a graph is said to be locally finite if every vertex has only finitely many neighbors.
    \item[(iv)] The group $G$ acts on $\Cay(G,S)$ by left multiplication, and this
    action is transitive on the vertex set.
\end{itemize}
    
\end{rem}

\noindent Two Cayley graphs $\Cay(G,S)$ and $\Cay(H,T)$ 
are isomorphic if there is a \emph{graph isomorphism} between them, i.e, a bijection $f: \Cay(G,S) \to \Cay(H,T) $ such that for all $g_1,g_2 \in G$, $g_1$ and $g_2$ are adjacent in $\Cay(G,S)$ if and only if $f(g_1)$ and $f(g_2)$ are adjacent in $\Cay(H,T)$.

\begin{exa}
  \item [(i)] Let $G,H$ be two different finite groups having the same order, then $\Cay(G,S)\cong \Cay(H,T)$ where $S=G-\{e_G\}$ and $T=H-\{e_H\}$ are the respective generating sets.
  This is because the Cayley graphs are in this case complete graphs
  on the same number of vertices, and therefore isomorphic.
    \item [(ii)] In \cite{loh2013finitely}, L\"oh, as a consequence of her Theorem 1.3, showed that two finitely generated Abelian groups admit isomorphic Cayley graphs for finite generating sets if and only if they have the same rank and their torsion parts have the same cardinality.
\end{exa}

\noindent The following Lemma is the second step towards our main result.

\begin{lem} \label{lemma2}
{\em Let $G$ and $H$ be two groups with symmetric generating sets $S \subset G$ and $T \subset H$ not containing the identity.  Suppose that there exists a graph isomorphism $h:\Cay(G,S) \to \Cay(H,T)$ where $\Cay(G,S)$ and $\Cay(H,T)$ are the respective Cayley graphs. Then there exists a graph isomorphism $f:\Cay(G,S) \to \Cay(H,T)$ such that $f(e_G)=e_H$. Moreover $f(S)=T$.}
\end{lem}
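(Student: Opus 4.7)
The plan is to compose the given graph isomorphism $h$ with a left translation on $H$ that moves the image of the identity back to $e_H$. Concretely, I would set
$$f(g) := h(e_G)^{-1} \cdot h(g)$$
for every $g \in G$; then by construction $f(e_G) = h(e_G)^{-1} h(e_G) = e_H$, so this candidate already satisfies the first requirement.

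To see that $f$ is a graph isomorphism, I would invoke Remark \ref{elementary}(iv): left multiplication by a fixed element $h_0 \in H$ preserves the edge set $\{\{h, ht\} : h \in H,\ t \in T\}$ of $\Cay(H,T)$ because $\{h_0 h, h_0 h t\}$ is again of this form. Hence left multiplication is a graph automorphism of $\Cay(H,T)$, and $f$ is the composition of the graph isomorphism $h$ with the graph automorphism $L_{h(e_G)^{-1}}$, so it is itself a graph isomorphism.

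The identity $f(S) = T$ then follows from a short neighborhood argument. Since $S$ is symmetric and does not contain $e_G$, the set of vertices adjacent to $e_G$ in $\Cay(G,S)$ is exactly $\{e_G \cdot s : s \in S\} = S$; analogously the neighborhood of $e_H$ in $\Cay(H,T)$ is exactly $T$. Because $f$ is a graph isomorphism with $f(e_G) = e_H$, it maps the neighborhood of $e_G$ bijectively onto the neighborhood of $e_H$, yielding $f(S) = T$.

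There is no real obstacle here: the entire content is that $H$ acts vertex-transitively on $\Cay(H,T)$ by graph automorphisms via left multiplication, which lets one renormalize any graph isomorphism so that it sends $e_G$ to $e_H$. The only points requiring a line of care are that the hypotheses $e_G \notin S$ and $e_H \notin T$ are used to identify the $1$-neighborhoods of the respective identities with the generating sets themselves.
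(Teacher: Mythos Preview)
Your proof is correct and follows essentially the same approach as the paper: you compose $h$ with the left translation by $h(e_G)^{-1}$, invoke Remark~\ref{elementary}(iv) to conclude that $f$ is a graph isomorphism, and then read off $f(S)=T$ from the fact that $S$ and $T$ are the neighborhoods of the respective identities. Your write-up is in fact slightly more detailed than the paper's, since you spell out explicitly why left multiplication preserves edges and where the hypotheses $e_G\notin S$, $e_H\notin T$ enter.
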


\begin{proof}
    We define the map $f$ by $f(g)=ah(g)$ where $a=h(e_G)^{-1}$.
    Since $h:\Cay(G,S)\to \Cay(H,T)$ is a graph isomorphism and since left multiplication by
    $a$ is a graph automorphism of $\Cay(H,T)$ 
    by Remark \ref{elementary}(iv), the map $f$ is a graph
    isomorphism with $f(e_G)=e_H$.

    The vertices adjacent to $e_G$ are the elements of $S$, and the vertices
    adjacent to $e_H$ are the elements of $T$. Hence $f$ maps $S$ bijectively
    onto $T$.
\end{proof}

\vspace{12pt}

\noindent We now state and prove the final step of our main result.

\begin{thm} \label{main}
 Let $\Gamma=(V,E)$ be a graph and for each vertex $v \in V$, let $G_v$ and $H_v$ be
 vertex groups with symmetric
generating sets 
$S_v\subset G_v-\{e_{G_{m}}\}$ and 
$T_v\subset H_v-\{e_{H_{m}}\}$. Assume also that $f_v:\Cay(G_v,S_v) \to \Cay(H_v,T_v)$ is for each $v\in V$ a graph isomorphism.
Denote the graph products of $G_v$ and $H_v$ as  $G_\Gamma$ and $H_\Gamma$ respectively, and put $S=\bigcup_{v\in V}S_v$ and $T=\bigcup_{v\in V}T_v $. Then there is a graph isomorphism
$$\Cay(G_\Gamma, S) \cong \Cay(H_\Gamma, T).$$ 

\end{thm}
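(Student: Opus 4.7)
My plan is to combine Lemmas \ref{lemma1} and \ref{lemma2}. First I would apply Lemma \ref{lemma2} at each vertex and, without loss of generality, replace each $f_v$ by a graph isomorphism (still denoted $f_v$) satisfying $f_v(e_{G_v}) = e_{H_v}$ and $f_v(S_v) = T_v$. Each $f_v$ is then in particular a bijection of vertex groups fixing the identity, so Lemma \ref{lemma1} produces a bijection $f \colon G_\Gamma \to H_\Gamma$ acting syllable-wise on reduced words and sending reduced words to reduced words. It remains to verify that $f$ preserves adjacency.

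The core task is to show, for each $g \in G_\Gamma$ and each $s \in S_v \subseteq S$, that $f(gs) = f(g)\cdot t$ for some $t \in T$. I would fix a reduced word $(g_1, \ldots, g_k)$ for $g$ with $g_i \in G_{v_i}$ and analyze how right multiplication by $s$ modifies the normal form. By the theory of reduced words, either $(g_1, \ldots, g_k, s)$ is itself reduced, or there is an index $j$ (unique up to syllable shuffling) with $v_j = v$ such that $v_{j+1}, \ldots, v_k$ are all adjacent to $v$ in $\Gamma$; in the latter situation $s$ shuffles leftward to meet $g_j$ and combines with it. I would split into three cases accordingly. In the reduced case, $f(gs) = f(g)\cdot f_v(s)$, so $t = f_v(s) \in T_v$. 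If such $j$ exists and $g_j s \neq e_{G_v}$, then the reduced form of $gs$ is obtained from that of $g$ by replacing the syllable $g_j$ with $g_j s$; since $g_j$ and $g_j s$ are adjacent in $\Cay(G_v, S_v)$, their images $f_v(g_j)$ and $f_v(g_j s)$ are adjacent in $\Cay(H_v, T_v)$, which forces $t := f_v(g_j)^{-1} f_v(g_j s)$ to lie in $T_v$. If $g_j s = e_{G_v}$, then $g_j = s^{-1} \in S_v$, the reduced form of $gs$ is obtained by deleting $g_j$, and $t := f_v(g_j)^{-1} \in T_v$ works by symmetry of $T_v$. In all three cases $t \in T$, establishing adjacency in $\Cay(H_\Gamma, T)$.

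The converse direction, that edges pull back to edges, would follow by applying the same argument to $f^{-1}$, which by Lemma \ref{lemma1} is the syllable-wise bijection built from the inverses $f_v^{-1}\colon \Cay(H_v, T_v) \to \Cay(G_v, S_v)$. The main obstacle I anticipate is the case analysis above, and in particular the realization that $f$ does \emph{not} intertwine right multiplication by $s \in S_v$ with right multiplication by $f_v(s)$ in general; all one can hope for is adjacency, which is precisely what the conclusion $f_v(S_v) = T_v$ provided by Lemma \ref{lemma2} was engineered to deliver.
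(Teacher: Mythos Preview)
Your proposal is correct and follows essentially the same strategy as the paper: normalize the $f_v$ via Lemma~\ref{lemma2}, build the syllable-wise bijection $f$ via Lemma~\ref{lemma1}, and then verify adjacency by a three-case analysis of how right multiplication by $s\in S_v$ interacts with a reduced word for $g$. The only cosmetic difference is that the paper first shuffles a $G_v$-syllable to the rightmost position of the reduced word for $g$ (so that the interaction with $s$ happens at the end), whereas you shuffle $s$ leftward to meet $g_j$; your handling of the cancellation case $g_js=e$ is in fact slightly cleaner, since you correctly identify $t=f_v(g_j)^{-1}\in T_v$ rather than asserting $t=f_v(s)$.
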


\begin{proof}
By Lemma \ref{lemma2} we may assume that each $f_v$
maps $e_{G_v}$ to $e_{H_v}$.
Let $w=(g_1, \dots ,g_k)$ be a reduced word in $G_\Gamma$. It follows from \cref{lemma1} that there exist a bijection $f:G_\Gamma \to H_\Gamma$ such that $f(g_1\cdots g_k)=f_{v_1}(g_1)\cdots f_{v_k}(g_k)$
for each reduced word $(g_1,\ldots,g_k)$.
This already gives us a bijection between the vertices of the
Cayley graphs, which maps $S$ bijectively onto $T$. 
 It remains to show that $f$  preserves adjacency.

First of all, we note that for each $s\in S$, we have
a reduced word $(s)$. 
In particular, $S$ is a symmetric generating set of $G_\Gamma$ not containing the identity, and similarly for $T$.

Let $g\in G_\Gamma$ and $s\in S$. we claim that
$f(g)$ and $f(gs)$ are adjacent in $\Cay(H_\Gamma,T)$ and we
distinguish three cases. For this, we consider all reduced words
$(g_1,\ldots,g_k)$ representing $g$. We also assume that $s\in S_v$,
for a vertex $v\in V$.
\begin{itemize}[leftmargin=15pt, labelwidth=2em, labelsep=0.5em, align=left]
\renewcommand\labelitemi{}
\item [Case 1:]No reduced word $(g_1,\ldots,g_k)$
                representing $g$ ends with a last syllable $g_k\in G_v$. We claim that $(g_1,\ldots,g_k,s)$ is a reduced word representing $gs$. 
                Suppose that  $(g_1,\ldots,g_k,s)$ is not a reduced word, then 
                $s$ undergoes shuffling to the left as
                $(g_1,\ldots,g_j,s, \ldots g_k)$ 
                so that it combines with a syllable $g_j\in G_v$
                for some $j<k$. But then we could also shuffle $g_j$ to the 
                right-most position, contradicting our assumption.
                Hence $(g_1,\ldots,g_k,s)$ is a reduced word. Then $f(g)=f_{v_1}(g_1)\cdots f_{v_k}(g_k)$
                and $f(gs)=f_{v_1}(g_1)\cdots f_{v_k}(g_k)f_v(s)=f(g)f(s)$,
                with $f(s)\in T$. This shows that $f(g)$ and $f(gs)$ are adjacent.
            
\item [Case 2:] The element $g$ can be written as a reduced word 
               $(g_1,\ldots,g_k)$ with last syllable $g_k=s^{-1}$. Then $(g_1,\ldots,g_{k-1})$
               is a reduced word representing $gs$ and 
               $f(gs)=f_{v_1}(g_1)\cdots f_{{v}_{k-1}}(g_{k-1})=f(g)f(s)$.
                Hence $f(gs)=f(g)f(s)$ also in this case, and $f(g)$ is adjacent to $f(gs)$.
\item [Case 3:] The element $g$ can be written as a reduced word 
              $(g_1,\ldots,g_k)$ with the last syllable $g_k\in G_v$,  with $g_k\neq s^{-1}$. Then $(g_1,\ldots,g_ks)$ is a reduced word representing $gs$, since the last syllable $g_k$ combines with $s$ in $(g_1,\ldots,g_ks)$ to be one syllable $g_ks$ and
              $g_ks$ cannot be
              shuffled next to any $g_j$, with $j<k$ and $g_j\in G_v$. This is because the consecutive syllables do not belong to adjacent vertex groups and having $g_ks$ as the last syllable does not change that. 
             From this it follows that
            $f(g_1\cdots g_ks)=f_v(g_1) \cdots f_{{v}_{k}}(g_ks)$. Then
             \begin{align*}
                f(g_1 \cdots g_k) &=f(g_1 \cdots g_{k-1})f_{v}(g_{k}), \\ 
                f(g_1 \cdots g_ks) &= f(g_1 \cdots g_{k-1})f_{v}(g_{k}s).
            \end{align*}
            Since the map $f_v$ is an isomorphism of Cayley graphs according to Lemma \ref{lemma2}, $f_{v}(g_k)$ being
            adjacent to $f_{v}(g_ks)$ implies that $f_{v}(g_ks) = f_{v}(g_k)t$ for some $t \in T_v$. Hence 
            $$f(g_1 \cdots g_{k-1})f_{v}(g_{k}s) = f(g_1 \cdots g_{k-1})f_{v}(g_{k})t.$$ 
            It follows that $f(gs)= f(g)t$, as required.
\end{itemize}
These are all possible cases. Hence $f$ maps
vertices adjacent to $g$ to vertices adjacent to $f(g)$.
The same holds for the inverse map $f':H_\Gamma\to G_\Gamma$.
Thus $f$ is an isomorphism of graphs.
\end{proof}

\vspace{12pt}

The main case of interest of Theorem~\ref{main} is when the graph $\Gamma$ is finite and when the generating sets $S_v$ are also finite.
Then the Cayley graph of $G_\Gamma$ has finite degree at each vertex.
\begin{exa} \label{exa2}
Let $\Gamma$ be a finite graph and for each vertex $v$, let $G_v$ and $H_v$ be
finite groups with $|G_v|=|H_v|$. Then the graph products $G_\Gamma$ and $H_\Gamma$
have isomorphic Cayley graphs with respect to the finite generating sets
$S_1=\bigcup_{v\in V}(G_v-\{e_{G_{m}}\})$ and 
$S_2=\bigcup_{v\in V}(H_v-\{e_{H_{m}}\})$, i.e,
$\Cay(G_\Gamma,S_1)\cong \Cay(H_\Gamma,S_2).$

\noindent In particular, $G_\Gamma$ and $H_\Gamma$ are isometric with respect to their word metrics.
However, for suitable choices of the vertex groups $G_v,H_v$, the groups $G_\Gamma$ and 
$H_\Gamma$ are not isomorphic.

\vspace{12pt}

\noindent The condition $|G_v|=|H_v|$ cannot be omitted, as the following example shows. Let $\Gamma$ denote the graph with two vertices $u,v$ and no edge. Put $G_u=G_v=\mathbb Z/2$ and $H_u=H_v=\mathbb Z/3$.
Then the Cayley graphs of the graph products $G_\Gamma=\mathbb Z/2*\mathbb Z/2$ and $H_\Gamma=\mathbb Z/3*\mathbb Z/3$ are not isomorphic and not even quasi-isometric, because they have 
different numbers of ends, $|ends(G_\Gamma)|=2$ and $|ends(H_\Gamma)|=\infty$.
The first Cayley graph is an infinite line, while the second Cayley graph is a trivalent tree.

\end{exa}

\noindent The following is a result that was proved by Genevois and Martin's 
\cite[Lemma 3.18]{genevois2019automorphisms} about the maximal finite subgroup of graph products of finite groups. It will help us to show that certain graph products of finite groups are not isomorphic. A clique in a graph is a subgraph which is complete.
\begin{lem}\label{lemmax}
{\em Let $G_{\Gamma}$ be a graph product of finite groups. The maximal finite subgroups of $G_\Gamma$ are exactly the $g (G_\Lambda) g^{-1}$ where $g\in G_{\Gamma}$ and for all maximal cliques $\Lambda\subseteq\Gamma$, $G_\Lambda$ is a subgroup generated by vertex groups corresponding to the vertices of $\Lambda$. Also, $G_\Lambda$ is isomorphic to the direct product of the vertex groups $G_v$ whose vertices $v$ are in $\Lambda$.}
 \end{lem}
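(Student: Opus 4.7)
The plan is to prove the lemma in two stages: first verify the ``existence'' direction that conjugates of $G_\Lambda$ for cliques $\Lambda$ are finite subgroups of $G_\Gamma$, and then establish the ``converse with maximality'' direction that every finite subgroup is contained in $gG_\Lambda g^{-1}$ for some clique $\Lambda$, with maximality forcing $\Lambda$ to be a maximal clique.

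For the first direction, I would fix a clique $\Lambda \subseteq \Gamma$ and observe that the vertex groups $\{G_v : v \in \Lambda\}$ pairwise commute, by the defining relations of the graph product. The natural homomorphism $\prod_{v \in \Lambda} G_v \to G_\Lambda$ is therefore well-defined and surjective, and for injectivity I would invoke the Normal Form Theorem~\ref{normal}: any nontrivial tuple in $\prod_{v \in \Lambda} G_v$ yields, after removing trivial entries, a word with at most one syllable per vertex in $\Lambda$, which is already reduced in $G_\Gamma$ and hence cannot represent the identity. This gives $G_\Lambda \cong \prod_{v \in \Lambda} G_v$, so $G_\Lambda$ is finite and so is every conjugate.

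For the converse, my preferred approach is via the action of $G_\Gamma$ on its associated Davis-style CAT($0$) cube complex $X$. In the standard construction, the cubes of $X$ correspond to cosets of the clique subgroups $G_\Lambda$, and the stabilizer of a cube is precisely a conjugate $gG_\Lambda g^{-1}$. Given a finite subgroup $F \le G_\Gamma$, the Bruhat--Tits fixed-point theorem furnishes an $F$-fixed point $x \in X$. Since $X$ is a cube complex, $F$ must preserve the unique smallest cube containing $x$, so $F \subseteq g G_\Lambda g^{-1}$ for some clique $\Lambda$. For the maximality half, if $F$ is a maximal finite subgroup then $F$ coincides with this conjugate, and any non-maximal clique $\Lambda$ can be enlarged to $\Lambda'$, yielding the strict containment $g G_\Lambda g^{-1} \subsetneq g G_{\Lambda'} g^{-1}$; thus $\Lambda$ must itself be maximal.

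The main obstacle is the construction and verification of the cube complex $X$, in particular the identification of its cube stabilizers with conjugates of clique subgroups; this is the nontrivial structural input. A more self-contained alternative is induction on $|V|$: pick any vertex $v$ that is not adjacent to every other vertex and use the amalgamated product splitting $G_\Gamma = G_{\Gamma - v} *_{G_{\mathrm{link}(v)}} G_{\mathrm{star}(v)}$, combined with the Bass--Serre fact that finite subgroups of an amalgamated product are subconjugate to one of the two factors; the inductive hypothesis then applies to the strictly smaller defining graphs $\Gamma - v$ and $\mathrm{star}(v)$. The base case in which every vertex is adjacent to every other (so $\Gamma$ is already a clique) is handled directly by the first part.
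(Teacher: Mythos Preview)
The paper does not supply its own proof of this lemma: it is quoted from Genevois--Martin \cite[Lemma~3.18]{genevois2019automorphisms} and used as a black box for the non-isomorphism theorem that follows. There is therefore no argument in the paper to compare yours against.

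For what it is worth, both of your proposed routes are standard and sound in outline. The CAT(0) cube-complex approach is in fact essentially the one Genevois and Martin take (they work with the Davis complex of the graph product), while the Bass--Serre induction via the splitting $G_\Gamma = G_{\Gamma - v} *_{G_{\mathrm{link}(v)}} G_{\mathrm{star}(v)}$ is a well-known self-contained alternative. One small point you gloss over: your fixed-point argument shows that every maximal finite subgroup equals some $gG_\Lambda g^{-1}$ with $\Lambda$ a maximal clique, but you do not check the converse, namely that each such conjugate is itself maximal among finite subgroups. For that you must rule out a strict containment $gG_\Lambda g^{-1}\subsetneq hG_{\Lambda'}h^{-1}$ between conjugates of two maximal-clique subgroups; this follows from the standard fact that intersections of conjugates of parabolic subgroups in a graph product are again parabolic (so such a containment would force $\Lambda$ into a proper subclique of $\Lambda'$), but it deserves a sentence.
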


\noindent We now establish the non-isomorphism condition of graph products by analyzing their maximal finite subgroups. This will help us see that graph products can have isomorphic Cayley graphs, despite being non-isomorphic themselves.

 \begin{thm}
 Let $\Gamma$ be a finite graph and let $G_\Gamma$ and $H_\Gamma$ be graph products of finite groups.
 Assume that for all maximal cliques $\Lambda,\Delta \in \Gamma$, $G_\Lambda \not\cong H_\Delta$
 then, $G_{\Gamma}\not\cong H_{\Gamma}$. 
\end{thm}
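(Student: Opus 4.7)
The plan is to argue by contradiction using Lemma~\ref{lemmax} in an essentially immediate way. Suppose that $\varphi\colon G_\Gamma \to H_\Gamma$ is a group isomorphism. Since being a maximal finite subgroup is an intrinsic property of a group, $\varphi$ must send the set of maximal finite subgroups of $G_\Gamma$ bijectively onto the set of maximal finite subgroups of $H_\Gamma$.

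Next, I would fix any maximal clique $\Lambda \subseteq \Gamma$ (which exists because $\Gamma$ is finite, and we may assume non-empty, since otherwise the statement is vacuous). By Lemma~\ref{lemmax}, the subgroup $G_\Lambda$, isomorphic to the direct product of the finite vertex groups $G_v$ for $v \in \Lambda$, is a maximal finite subgroup of $G_\Gamma$. Applying $\varphi$, the image $\varphi(G_\Lambda)$ is a maximal finite subgroup of $H_\Gamma$, so by the other direction of Lemma~\ref{lemmax} there exist a maximal clique $\Delta \subseteq \Gamma$ and an element $h \in H_\Gamma$ such that
$$\varphi(G_\Lambda) = h\, H_\Delta\, h^{-1}.$$

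Conjugation is an isomorphism, so chaining the isomorphisms yields $G_\Lambda \cong \varphi(G_\Lambda) = h H_\Delta h^{-1} \cong H_\Delta$, contradicting the standing hypothesis that $G_\Lambda \not\cong H_\Delta$ for \emph{every} pair of maximal cliques $\Lambda, \Delta$ of $\Gamma$. Hence no isomorphism $\varphi$ can exist, and $G_\Gamma \not\cong H_\Gamma$.

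I do not expect a serious obstacle here: the entire content is packaged inside Lemma~\ref{lemmax}, and the proof is essentially a one-line transport of isomorphism along $\varphi$. The only minor care needed is to invoke both directions of Lemma~\ref{lemmax}, namely that every $G_\Lambda$ (for $\Lambda$ a maximal clique) is a maximal finite subgroup, and conversely that every maximal finite subgroup of $H_\Gamma$ has the form $h H_\Delta h^{-1}$ for some maximal clique $\Delta$.
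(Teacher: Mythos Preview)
Your proof is correct and follows exactly the same approach as the paper: both argue that an isomorphism $G_\Gamma\cong H_\Gamma$ would force some maximal finite subgroup of $G_\Gamma$ to be isomorphic to some maximal finite subgroup of $H_\Gamma$, and then invoke Lemma~\ref{lemmax} to translate this into $G_\Lambda\cong H_\Delta$ for some maximal cliques $\Lambda,\Delta$, contradicting the hypothesis. The paper compresses this into a single sentence, whereas you spell out the contrapositive and the use of both directions of Lemma~\ref{lemmax} explicitly.
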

 
\begin{proof}
    By Lemma \ref{lemmax}, the maximal finite subgroups of $G_\Gamma$ and $H_\Gamma$ are not isomorphic,
   hence $G_\Gamma$ and $H_\Gamma$ are not isomorphic. 
\end{proof}

\begin{cor}
{\em Let $\Gamma$ be a finite graph and let $G_\Gamma$ and $H_\Gamma$ be graph products of finite groups $G_v$ and $H_v$, with $|G_v|=|H_v|$. Then for suitable choices of vertex groups, $G_{\Gamma}\not\cong H_{\Gamma}$, while $\Cay(G_\Gamma,S_1)\cong \Cay(H_\Gamma,S_2)$.}
\end{cor}

\noindent This applies in particular to example \ref{exacay} below.

\noindent The graph product is a free product if the finite graph $\Gamma=(V,E)$ has no edges between the vertices. Let $V$ be a finite set and $\{G_v| v \in V\}$ be a finite set of groups and $G= *G_v$ denote the free product of groups $G_v$, then $G_v$ are called the free factors of $G$.
 The following is an example of free products of finite groups with isomorphic Cayley Graphs.
\begin{exa} \label{exacay}
    Let $G_1 = \mathbb{Z}_4$   and $G_2 =S_3$ be free factors then $G= G_1 * G_2 = \mathbb{Z}_4 * S_3$, where $\mathbb{Z}_4 = \langle a^4 = e\rangle$  and we denote the elements of $S_3$ by $\{e,b_1,b_2,b_3,b_4,b_5\}$. Let $S_1 = ((\mathbb{Z}_4 - \{e_{\mathbb{Z}_4}\}) \cup (S_3 - \{e_{S_3}\}))$ be the generating set of $G$. Then the Cayley graph of $G$ is denoted by $\Cay(G,S_1)$.
    
    Also, Let  $H_1 =\mathbb{Z}_2 \times \mathbb{Z}_2$ and $H_2 =\mathbb{Z}_6=\langle d^6=e\rangle $ be the free factors then $H = H_1 * H_2 =(\mathbb{Z}_2 \times \mathbb{Z}_2) * \mathbb{Z}_6$. Let $S_2 =(((\mathbb{Z}_2 \times \mathbb{Z}_2) - \{e_{(\mathbb{Z}_2 \times \mathbb{Z}_2})\}) \cup (\mathbb{Z}_6 - \{e_{\mathbb{Z}_6}\}))$ be the generating set of $H$. We denote the Cayley graph of $H$ by $\Cay(H,S_2)$.

Since $|\mathbb{Z}_4 |=|\mathbb{Z}_2 \times \mathbb{Z}_2|$ and $|S_3|=|\mathbb{Z}_6|$, it follows from Theorem \ref{main} that $\Cay(G,S_1) \cong \Cay(H,S_2)$.

Figure \ref{fig:Cayley Graph} shows part of the Cayley graph of $G$ which is isomorphic to the Cayley graph of $H$.

  \begin{center}
\begin{figure}[h!]{}
 \includegraphics[scale=0.51]{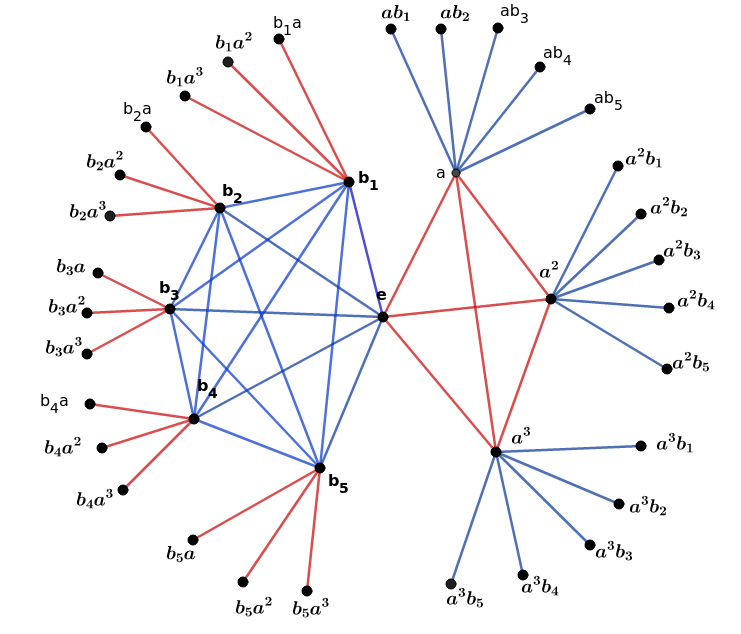}
   \caption{Cayley graph of $\mathbb{Z}_4 * S_3$ (The blue edges represent 
    $S_3$ while the red ones represent $\mathbb{Z}_4$.)}
 \label{fig:Cayley Graph}
\end{figure}
\end{center}
\end{exa}

\bibliographystyle{plain}

\begin{thebibliography}{3}

\bibitem{genevois2019automorphisms}
Anthony Genevois and Alexandre Martin.
Automorphisms of graph products of groups from a geometric
  perspective.
{\em Proceedings of the London Mathematical Society},
  119(6):1745--1779, 2019.

\bibitem{Gibbins}
Aliska Gibbins.
Automorphism groups of graph products of buildings.
{\em arXiv:1407.4712}, 2014.

\bibitem{green1990graph}
Elisabeth~Ruth Green.
 {\em Graph products of groups}.
 PhD thesis, University of Leeds, 1990.

\bibitem{hermiller1995algorithms}
Susan Hermiller and John Meier.
 Algorithms and geometry for graph products of groups.
 {\em Journal of Algebra}, 1995.

\bibitem{hsu1999linear}
Tim Hsu and Daniel~T Wise.
 On linear and residual properties of graph products.
 {\em Michigan Mathematical Journal}, 46(2):251--259, 1999.

\bibitem{loh2013finitely}
Clara L{\"o}h.
 Which finitely generated abelian groups admit isomorphic cayley
  graphs?
 {\em Geometriae Dedicata}, 164:97--111, 2013.

\bibitem{loh2017geometric}
Clara L{\"o}h.
 {\em Geometric group theory}.
 Springer, 2017.

\bibitem{ruane2013cat}
Kim Ruane and Stefan Witzel.
 {CAT(0)} cubical complexes for graph products of finitely generated
  abelian groups.
 {\em arXiv preprint arXiv:1310.8646}, 2013.

\end{thebibliography}

\end{document}